  \newtheorem{theorem}{Theorem}[section]
  \newtheorem{lemma}[theorem]{Lemma}
  \newtheorem{conjecture}[theorem]{Conjecture}
  \theoremstyle{definition}
  \newtheorem{example}[theorem]{Example}
  \newtheorem{remark}[theorem]{Remark}
  \newtheorem{cor}[theorem]{Corollary}
  \def\zy{\displaystyle{\frac{z^n-y^n}{z-y~~~}}}
  \def \flt{Fermat's last theorem}
  \title
  {On the prime decomposition of integers of the form $\zy$
  }
  \author{Rachid Marsli\\
  Preparatory Math Department\\
  King Fahd University of Petroleum and Minerals\\Dhahran, 31261\\
  Kingdom of Saudi Arabia\\
  rmarsliz@kfupm.edu.sa\\
  }
  \date{\today}
\begin{document}
  %
  %
  %
  \maketitle
  \begin{abstract}
  In this work, the author shows a sufficient and necessary condition for an integer of the form 
  $\zy$ to be divisible by some perfect $mth$ power $p^m$,
  where $p$ is an odd prime and $m$ is a positive integer.
  A constructive method of this type of integers is explained with details and examples.
  Links between the main result and known ideas such as Fermat's last theorem,
  Goormaghtigh conjecture and Mersenne numbers are discussed. 
  Other related ideas, examples and applications are provided.
  \end{abstract} 
  \noindent
  {\it AMS Subj. Class.:11A07 ; 11D41} 
  \vskip2mm
  \noindent
  {\it Keywords:}
      primitive root modulo integer; prime integer, perfect $nth$ power; Fermat's last theorem, Goormaghtigh conjucture.  
  %
  %
  %
  %
  \begin{section}{Introduction}
  Contrary to our expectations, while we were trying to prove 
  Fermat's last theorem  by showing
  that  if $y$ and $z$ are relatively prime and $n$ and $p$ are odd prime integers,
  then $p^n$ does not divide $\zy$  , we found
  that for almost every $p$ we can construct infinitely many integers of the form
  $\zy$ each of which is divisible by $p^n$.
  Not only that, but no matter how large is the positive integer $m$, we can always construct integers of
  the form $\zy$ that are divisible by $p^m$.
  The main tool of our analysis in this work, is the concept of primitive root modulo integer.
  Given a positive integer $p$, we say that $r$ is a primitive root modulo $p$ if 
  $r$ is an integer relatively prime to $p$ and the smallest integer $a$ such that 
  $r^a \equiv 1 ~~ (mod~~p)$ is $\phi(p)$, where $\phi$ denotes the well-known Euler function.
  A positive integer possesses a primitive root if and only if
  $n= 2, 4, p^t$ or $2p^t,$  where $p$ is an odd prime and $t$ is a positive integer \cite[Theorem 8.14]{KR}.
  Another important fact about primitive roots is given by the following theorem which we state as
     a lemma for its use in the proof of the main result.
     \begin{lemma}\label{l1} \cite[Theorem 8.9]{KR}
     Let $p$ be an odd prime, then $p^k$ has a primitive root for all positive integer $k$.
     Moreover, if $r$ is a primitive root modulo $p^2$, then $r$ is a primitive root modulo $p^k$,
     for all positive integer $k$.
     \end{lemma}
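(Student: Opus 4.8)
The plan is to carry out the standard ``lifting the exponent'' argument for primitive roots modulo prime powers. The case $k=1$ is the classical fact that the group of units modulo an odd prime is cyclic, which I would take as known; everything else reduces to two passages, first from $p$ to $p^2$ and then from $p^2$ to $p^k$.

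First I would treat the passage to $p^2$. Starting from a primitive root $g$ modulo $p$, I claim one of $g$ and $g+p$ --- call it $r$ --- satisfies $r^{p-1}\not\equiv 1\pmod{p^2}$; both are $\equiv g\pmod p$, so whichever one we pick is still a primitive root modulo $p$. This follows from the expansion $(g+p)^{p-1}\equiv g^{p-1}-p\,g^{p-2}\pmod{p^2}$: if $g^{p-1}\equiv 1\pmod{p^2}$ then $(g+p)^{p-1}\equiv 1-p\,g^{p-2}\not\equiv 1\pmod{p^2}$ since $p\nmid g$. Having fixed such an $r$, I would observe that its order modulo $p^2$ divides $\phi(p^2)=p(p-1)$ and is divisible by its order $p-1$ modulo $p$, hence equals $p-1$ or $p(p-1)$; the choice of $r$ excludes $p-1$, so $r$ is a primitive root modulo $p^2$.

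Next I would handle the passage from $p^2$ to $p^k$, which also yields the ``moreover'' clause. Write $r^{p-1}=1+ap$ with $p\nmid a$ --- this is precisely the condition produced above, and it is forced by $r$ being a primitive root modulo $p^2$. The crux is the claim, proved by induction on $j\ge 0$ via the binomial theorem, that $r^{p^{j}(p-1)}=1+a_j p^{\,j+1}$ with $p\nmid a_j$: in the inductive step one raises $1+a_jp^{\,j+1}$ to the $p$-th power and checks that, apart from the leading $1+a_jp^{\,j+2}$, every remaining term is divisible by $p^{\,j+3}$, using that $\binom{p}{2}=\tfrac{p(p-1)}{2}$ is a multiple of $p$ --- this is exactly where $p$ being odd is used. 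Granting the claim, if $d$ is the order of $r$ modulo $p^k$ then $d\mid p^{k-1}(p-1)$ and $(p-1)\mid d$, so $d=p^{s}(p-1)$ with $0\le s\le k-1$; should $s\le k-2$, then $r^{p^{s}(p-1)}=1+a_sp^{\,s+1}\equiv 1\pmod{p^k}$ would force $p^{\,k-s-1}\mid a_s$, contradicting $p\nmid a_s$. Hence $s=k-1$ and $r$ is a primitive root modulo $p^k$.

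The hard part will be the inductive computation of the power of $p$ dividing $r^{p^{j}(p-1)}-1$: one must verify that a single application of the $p$-th power map increases this exponent by exactly one, neither more nor less, and it is here that the hypothesis ``$p$ odd'' is indispensable --- the analogue fails at $p=2$ because $\binom{2}{2}$ is not divisible by $2$. The remaining steps are routine order counting.
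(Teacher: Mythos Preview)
Your argument is the standard textbook proof and is correct. Note, however, that the paper does not supply its own proof of this lemma: it is quoted verbatim as \cite[Theorem~8.9]{KR} and used as a black box, so there is no in-paper argument to compare against. What you have written is essentially the proof one finds in Rosen (and most elementary number theory texts): lift a primitive root from $p$ to $p^2$ by adjusting $g\mapsto g+p$ if necessary, then propagate from $p^2$ to $p^k$ via the inductive computation $r^{p^{j}(p-1)}=1+a_jp^{\,j+1}$ with $p\nmid a_j$. Your identification of the step where oddness of $p$ enters (the $\binom{p}{2}$ term, and more generally the failure of the exponent to increase by \emph{exactly} one when $p=2$) is accurate. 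One tiny point worth making explicit in the order-counting at the end: the divisibility $(p-1)\mid d$ relies on $r$ being a primitive root modulo $p$, which in the ``moreover'' clause is a consequence of $r$ being a primitive root modulo $p^2$ rather than an assumption; you use this implicitly, and it is easy, but it deserves one line.
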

  Note that there are some rare cases where a primitive root modulo $p$ is not 
  a primitive root modulo $p^2$. As an example, the prime integer $p=487$ has a primitive root $r=10$ 
  which is not a primitive root modulo $487^2$ \cite[Section 8.3]{KR}.
  More elementary ideas about primitive roots modulo integers
  can be found in number theory  textbooks such as
  \cite{DM}, \cite{KC}, \cite{KR}, \cite{NZM}, \cite{Ore} and \cite{UD}.
  Throughout the paper, the greatest common divisor of two integers $a$ and $b$ is denoted $(a,b)$.
  %
  %
  \end{section}
   %
   %
   %
    \begin{section}{Main result}
     \hskip7mm
     The following lemma is needed in the proof of the main result and contains some ideas that are
     well-known to mathematicians working on Fermat's last theorem. 
     Nevertheless, we prefer to provide a proof
     because we couldn't find a reference where all the three assertions of the lemma
     are proved together. 
     \begin{lemma}\label{l2}
     Let $y$ and $z$ be two relatively prime integers with $z\neq y$ and
     let $n$ be an odd prime integer.\\
     $~~~~~~~$ 1. If $~n~$ divides $z-y$, ~then ~ $\Big(z-y ~,~ \zy\Big) = ~n$.\\ 
     $~~~~~~~$ 2. If $~n~$ does  not divides $z-y$,
     ~then ~$n, ~(z-y)~$ and $~\displaystyle{\frac{z^n-y^n}{z-y~~~}}~$ are pairwise relatively prime.\\
     $~~~~~~~$ 3.  $~n^2~$ does  not divide $\zy$. 
    \end{lemma}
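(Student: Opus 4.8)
The plan is to work throughout with the explicit expansion
\[
\frac{z^n-y^n}{z-y}\;=\;\sum_{j=0}^{n-1} z^{\,n-1-j}\,y^{\,j},
\]
which I abbreviate by $Q$, and to use two elementary facts repeatedly. First, $(z-y,\,y)=1$, since any common prime divisor of $z-y$ and $y$ would also divide $z$, contradicting $(y,z)=1$. Second, reducing $Q$ modulo $z-y$ makes $z\equiv y$, so every summand collapses to $y^{\,n-1}$ and we obtain $Q\equiv n\,y^{\,n-1}\pmod{z-y}$; combined with the first fact this gives $(z-y,\,Q)=(z-y,\,n\,y^{\,n-1})=(z-y,\,n)$.

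For assertion 1, since $n$ is prime and $n\mid z-y$ we have $(z-y,\,n)=n$, hence $(z-y,\,Q)=n$ immediately. For assertion 2, when $n\nmid z-y$ the same identities give $(z-y,\,n)=1$ and $(z-y,\,Q)=1$, so the only thing left is $(n,\,Q)=1$. For that I would multiply back up and use Fermat's little theorem: $(z-y)\,Q=z^n-y^n\equiv z-y\pmod n$, so $(z-y)(Q-1)\equiv 0\pmod n$; since $n$ is prime and $n\nmid z-y$, this forces $n\mid Q-1$, and in particular $n\nmid Q$. Together these yield the three pairwise coprimalities.

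Assertion 3 splits on the same dichotomy. If $n\nmid z-y$ then $n\nmid Q$ by assertion 2, hence \emph{a fortiori} $n^2\nmid Q$. If $n\mid z-y$, I would write each $z^{\,n-1-j}=\bigl(y+(z-y)\bigr)^{\,n-1-j}$, expand by the binomial theorem, and discard every term carrying a factor $(z-y)^2$, since $n\mid z-y$ makes such a term divisible by $n^2$. What remains is
\[
Q\;\equiv\;\sum_{j=0}^{n-1}\Bigl(y^{\,n-1-j}+(n-1-j)\,y^{\,n-2-j}(z-y)\Bigr)y^{\,j}\;=\;n\,y^{\,n-1}+(z-y)\,y^{\,n-2}\,\frac{n(n-1)}{2}\pmod{n^2},
\]
and the second term is divisible by $n^2$ because $n\mid z-y$ and $\frac{n-1}{2}$ is an integer ($n$ being odd). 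Hence $Q\equiv n\,y^{\,n-1}\pmod{n^2}$; and since $n\mid z-y$ together with $(y,z)=1$ forces $n\nmid y$, we get $n\nmid y^{\,n-1}$, so $n^2\nmid n\,y^{\,n-1}$ and therefore $n^2\nmid Q$.

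The two gcd computations in assertions 1 and 2 are routine once the congruence $Q\equiv n\,y^{\,n-1}\pmod{z-y}$ is in hand. The step requiring real care is the binomial reduction for assertion 3 in the case $n\mid z-y$: one must check that precisely the linear term of each expansion survives modulo $n^2$ and that the surviving coefficient $\frac{n(n-1)}{2}$ supplies the extra factor of $n$ needed to kill the correction term. This is the arithmetic core of the lemma; it is essentially the statement that $n$ divides $\frac{z^n-y^n}{z-y}$ exactly once when $n\mid z-y$ and $n\nmid y$ (a special case of the ``lifting the exponent'' lemma), which one could invoke instead if one preferred.
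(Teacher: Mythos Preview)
Your proof is correct and reaches the same key congruences as the paper, namely $Q\equiv n\,y^{n-1}\pmod{z-y}$ for the gcd assertions and $Q\equiv n\,y^{n-1}\pmod{n^2}$ for assertion~3, so the two arguments are close in spirit. The execution differs in two places worth noting. First, the paper obtains everything from a single expansion $z^n=\bigl((z-y)+y\bigr)^n$, which immediately yields the closed form
\[
Q=(z-y)\sum_{i=2}^{n}\binom{n}{i}(z-y)^{i-2}y^{\,n-i}+n\,y^{n-1},
\]
and then reads off both the gcd statement and the $n^2$ statement from this one formula (using $n\mid\binom{n}{i}$ for $1\le i\le n-1$); you instead reduce the sum $\sum z^{n-1-j}y^{j}$ directly modulo $z-y$, and for assertion~3 re-expand each power $z^{n-1-j}$ separately. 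Both routes are equally valid; the paper's has the economy of doing the binomial expansion once. Second, for $(n,Q)=1$ in assertion~2 the paper rearranges its formula to obtain $Q\equiv(z-y)^{n-1}\pmod n$, whereas you go through Fermat's little theorem on $z^n-y^n$ to get $Q\equiv 1\pmod n$ directly; your route is a touch slicker here (and of course the two conclusions agree, since $(z-y)^{n-1}\equiv 1\pmod n$ by Fermat again). Your closing remark that assertion~3 in the case $n\mid z-y$ is a special instance of the lifting-the-exponent lemma is a useful observation not made in the paper.
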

    \begin{proof}
    We have
    $$z^n = (z-y+y)^n = \sum_{i=2}^{n}\binom{n}{i}(z-y)^i\,y^{(n-i)} + n(z-y)y^{(n-1)} + y^n,$$   
    from which,
    $$
    \begin{array}{lcl}
     z^n - y^n & = & (z-y) \displaystyle{\Big[\sum_{i=2}^{n}\binom{n}{i}(z-y)^{(i-1)}\,y^{(n-i)} + ny^{(n-1)}\Big]}\\
      & = & (z-y) \displaystyle{\Big[ (z-y) \Big\{\sum_{i=2}^{n}\binom{n}{i}(z-y)^{(i-2)}\,y^{(n-i)}\Big\} + ny^{(n-1)}\Big]},
    \end{array}
    $$
    so that
    \begin{equation}\label{f32}
    \frac{z^n - y^n}{z-y} = (z-y) \Big\{\sum_{i=2}^{n}\binom{n}{i}(z-y)^{(i-2)}\,y^{(n-i)}\Big\} + ny^{(n-1)}.
    \end{equation}    
    Since $y$ and $z$ are relatively prime, the power $y^{n-1}$ and $(z-y)$ are relatively prime.
    Hence, Formula (\ref{f32}) implies that
    $$  \Big(z-y ~,~ \zy\Big) = ~n, ~~ \text{if} ~n \text{~divides~} z-y,~~~~~~~~~~~~~~~~~~~~~~~~~~~~~~~~~~~$$
    and
    $$  \Big(z-y ~,~ \zy\Big) = ~1, ~~ \text{if} ~n \text{~is relatively prime to~} z-y.~~~~~~~~~~~~~~~~~$$
    Moreover, (\ref{f32}) can be  rewritten as
    \begin{equation}\label{f33}
    \frac{z^n - y^n}{z-y} = (z-y)^{n-1} +
    \Big\{\sum_{i=1}^{n-1}\binom{n}{i}(z-y)^{(i-1)}\,y^{(n-i)}\Big\}.
    \end{equation}
    Since $n$ is a prime integer, we have
    \begin{equation}\label{fh5}
    \Big(n, \binom{n}{i}\Big) = n  \text{~~~for~~~}  i=1, 2, \dots n-1. 
    \end{equation}
     From (\ref{f33}) and (\ref{fh5}) , we get
    \begin{equation}\label{fh4}
    \zy \equiv (z-y)^{n-1}~~ (mod~n). 
    \end{equation}
    It follows from (\ref{fh4}) that if $n$ is relatively prime to $z-y$,
    then $n$ and $\zy$ are relatively prime. This is to prove the second assertion.
    The third assertion of the lemma  follows directly from the second one if $n$ does not divide $z-y$.
    Otherwise, suppose that $n$ divides $z-y$. Then from (\ref{f32})  and (\ref{fh5}),
    we can see easily that, in this case,  
    \begin{equation}\label{fh6}
     \zy~ \equiv ~ ny^{(n-1)} ~~~ (mod~~n^2).
    \end{equation}
    If $n^2$ divides $\zy$, then (\ref{fh6}) implies that $n$ divides $y$, so that also, $n$ divides $z$
    since it divides $z-y$.
    This is in contradiction  with our assumptions that $y$ and $z$ are relatively prime. 
    \end{proof}
    %
    \begin{remark}
     The first two assertions of Lemma \ref{l2} apply to the case where $n=2$,
     but the third one does not. For example, if we take 
     $z=5, y=3$ and $n=2$, then $2^2$ divides $\displaystyle{\frac{5^2-3^2}{5-3~~}}=8$.
    \end{remark}
     Next we state and prove  the main result.
    \begin{theorem}\label{t14}
     Let $y$ and $z$  be two distinct nonnegative integers
     and let $n$ be an odd prime integer.
     Let $p$ be an odd prime integer that is different than $n$ and 
     relatively prime to $y$.
     Let $r$ be a primitive root modulo $p^2$ and
     let $m$ be a positive integer. 
     Then
     $p^m$ divides $\displaystyle{\frac{z^n-y^n}{z-y~~~}}$ if and only if 
     $$n \text{~~divides~~} p-1 \text{~~~~and~~~~} 
     z ~\equiv~ y~r^{cp^{m-1}} ~~~ (mod~~p^{m}),$$
     where $c$ is any integer that satisfies:
     \begin{enumerate}
     \item $0<c<p-1$.
     \item $p-1$ divides $nc$.
     \end{enumerate}
     \end{theorem}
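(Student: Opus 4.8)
The plan is to rewrite the assertion ``$p^{m}\mid\zy$'' as a statement inside the group of units modulo $p^{m}$, which is cyclic of order $\phi(p^{m})=p^{m-1}(p-1)$ and is generated by $r$ by Lemma~\ref{l1} (and, taking $k=1$ in that lemma, $r$ is also a primitive root modulo $p$).

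The first move is to show that $p^{m}\mid\zy$ forces $p\nmid z-y$. Indeed, by formula (\ref{f32}) the first term on the right-hand side is divisible by $z-y$, so if $p\mid z-y$ then $\zy\equiv ny^{n-1}\pmod p$, which is nonzero because $p\neq n$ and $(p,y)=1$. Hence in the ``only if'' direction we must have $p\nmid z-y$, and the identity $z^{n}-y^{n}=(z-y)\,\zy$ then gives $p^{m}\mid\zy\iff p^{m}\mid z^{n}-y^{n}$; moreover $p^{m}\mid z^{n}-y^{n}$ with $(p,y)=1$ forces $p\nmid z$, so $z$ is a unit modulo $p^{m}$. (In the ``if'' direction the hypothesis $z\equiv yr^{cp^{m-1}}\pmod{p^m}$ already makes $z$ a unit modulo $p$, and the same equivalence applies once we check $p\nmid z-y$.)

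Next I would set $t\equiv zy^{-1}\pmod{p^{m}}$ and write $t\equiv r^{a}\pmod{p^{m}}$ with $0\le a<p^{m-1}(p-1)$. Then $p^{m}\mid z^{n}-y^{n}\iff t^{n}\equiv1\pmod{p^{m}}\iff p^{m-1}(p-1)\mid na$. Since $p$ and $n$ are distinct primes, $(p^{m-1},n)=1$, so $p^{m-1}\mid a$; writing $a=cp^{m-1}$ turns the condition into ``$0\le c<p-1$ and $p-1\mid nc$'', which is Condition~2, and it yields $z\equiv y\,r^{cp^{m-1}}\pmod{p^{m}}$. To pin down Condition~1 and the hypothesis $n\mid p-1$, I would feed back the information $p\nmid z-y$, i.e.\ $t\not\equiv1\pmod p$: reducing modulo $p$ and using $p^{m-1}\equiv1\pmod{p-1}$ gives $r^{cp^{m-1}}\equiv r^{c}\pmod p$, so $t\not\equiv1\pmod p$ forces $p-1\nmid c$, i.e.\ (with $0\le c<p-1$) exactly $0<c<p-1$. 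Finally, an integer $c$ with $p-1\mid nc$ but $p-1\nmid c$ exists precisely when $(n,p-1)>1$, which for the prime $n$ means $n\mid p-1$. The converse direction runs the same chain backwards: from $n\mid p-1$ and a $c$ satisfying Conditions~1--2, set $a=cp^{m-1}$; then $p-1\mid nc$ gives $\phi(p^{m})\mid na$ and hence $(r^{a})^{n}\equiv1\pmod{p^{m}}$, while $0<c<p-1$ gives $r^{a}\not\equiv1\pmod p$; so $z\equiv yr^{a}\pmod{p^{m}}$ forces $p\nmid z-y$ and $p^{m}\mid z^{n}-y^{n}$, whence $p^{m}\mid\zy$.

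The step I expect to be the main obstacle is the exponent bookkeeping just described: splitting $a$ into its $p^{m-1}$--part and its $(p-1)$--part, and keeping straight that $r^{cp^{m-1}}$ collapses to $r^{c}$ modulo $p$ but emphatically not modulo $p^{m}$. One must also resist identifying $p^{m}\mid\zy$ with $t^{n}\equiv1\pmod{p^{m}}$ outright, because the degenerate case $t\equiv1\pmod p$ (equivalently $p\mid z-y$) can satisfy $t^{n}\equiv1\pmod{p^{m}}$ while $p\nmid\zy$; this is exactly why the argument of the second paragraph has to be carried out first.
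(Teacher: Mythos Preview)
Your argument is correct and follows essentially the same route as the paper: pass to the cyclic group $(\mathbb{Z}/p^{m}\mathbb{Z})^{\times}$ generated by $r$, translate $p^{m}\mid z^{n}-y^{n}$ into $\phi(p^{m})\mid na$, peel off the factor $p^{m-1}$ from $a$ using $p\neq n$, and recover $0<c<p-1$ from $p\nmid z-y$ via the reduction $r^{cp^{m-1}}\equiv r^{c}\pmod p$. One small difference worth noting: the paper invokes Lemma~\ref{l2} to obtain $p\nmid z-y$, and since that lemma assumes $(y,z)=1$ it is forced into a two-case split according to whether $y$ and $z$ are coprime; you instead read the conclusion directly off formula~(\ref{f32}), which needs only $p\neq n$ and $(p,y)=1$, so your version handles all $y,z$ at once and the case split disappears.
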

     \begin{proof}
     First recall that, by Lemma \ref{l1},
     $r$ is also a primitive root modulo $p^m$ for $m=1$ as well as for $m=3, 4, \dots$ 
     Suppose that  $n$ divides $p-1$ and
     \begin{equation}\label{fgg3}
     z ~\equiv~ y~r^{cp^{m-1}} ~~~ (mod~~p^{m}), 
     \end{equation}
     for some integer $c$ such that $0<c<p-1$ and $p-1$ divides $nc$.
     Formula (\ref{fgg3}) implies that 
     $z^n ~\equiv~ y^n~r^{ncp^{m-1}} ~~~ (mod~~p^{m}).$ 
     Since $p-1$ divides $nc$,
     it follows that $\phi(p^{m})$, which is equal to $(p-1)p^{m-1}$,
     divides $ncp^{m-1}$ and therefore
     \begin{equation}\label{fgg4}
     z^n ~\equiv~ y^n ~~~ (mod~~p^{m}). 
     \end{equation}
     Also, Formula (\ref{fgg3}) implies that 
     $z ~\equiv~ y~r^{cp^{m-1}} ~~~ (mod~~p)$, which is equivalent to
     $z ~\equiv~ y~r^c~r^{c(p^{m-1}-1)} ~~~ (mod~~p)$.
     Since $\phi(p)$, which is equal to $p-1$, divides $c(p^{m-1}-1)$, it follows that
     $z ~\equiv~ y~r^c ~~~ (mod~~p)$.
     By Lemma \ref{l1}, $r$ is a primitive root modulo $p$ and since $0<c<p-1$,
     we have that $r^c \not \equiv 1 ~~~(mod~~p)$.
     Hence
     \begin{equation} \label{fgg5}
     z\not\equiv y~~~ (mod~~p).
     \end{equation}
     It follows from (\ref{fgg4}) and (\ref{fgg5}) that 
     $\displaystyle{\frac{z^n-y^n}{z-y~~~}}$ is  divisible by $p^{m}$.\\
        Conversely,
        we have two different cases.
        \begin{enumerate}
        \item Case1: $z$ and $y$ are relatively prime.\\
        Suppose that $p^{m}$ divides $\displaystyle{\frac{z^n-y^n}{z-y~~~}}$.
        Then $p^{m}$ divides $z^n-y^n$ or equivalently,
        \begin{equation}\label{fgg6}
        z^n ~\equiv~ y^n~~~(mod~~p^{m}).
        \end{equation}
        Since $p$  is different than $n$ and divides $\zy$,
        Lemma \ref{l2} implies that $p$ does not divides $z-y$. Hence, 
        there exists an integer $k$ such that
        \begin{equation}\label{fhh1}
         0<k<(p-1)p^{m-1}
        \end{equation}
        and
        \begin{equation}\label{fh1}
        z ~\equiv~ y~r^{k}~~~(mod~~p^{m}). 
        \end{equation}
        This implies
        \begin{equation}\label{fgg7}
        z^n ~\equiv~ y^n r^{nk}~~~(mod~~p^{m}).
        \end{equation}
        From (\ref{fgg6}) and (\ref{fgg7}) we have 
        $y^n(1-r^{nk}) \equiv 0 ~ (mod~~p^{m})$, which leads to
        $(1-r^{nk}) \equiv 0 ~ (mod~~p^{m})$ since $y$ and $p$ are relatively prime.
        Therefore,
        \begin{equation}\label{fgg8}
         \phi(p^{m}), \text{~~which is equal to~~} (p-1)p^{m-1}, \text{~~divides~~} nk.
        \end{equation}        
        Since $p\neq n$, the above expression
        implies that
        $p^{m-1}$ divides $k$
        and because $0<k<(p-1)p^{m-1}$, there exists an integer $c$ such that $0<c<p-1$
        and
        \begin{equation}\label{fh2}
        k=c p^{m-1}. 
        \end{equation}
        From (\ref{fh2}) and (\ref{fgg8}), we have that	 $(p-1)p^{m-1}$ ~divides~ $n c p^{m-1}$.
        Thus,
        \begin{equation}\label{fh14}
         (p-1) \text{~~divides~~} nc.
        \end{equation}
        Since $0<c<p-1$ and $n$ is a prime integer,
        Formula (\ref{fh14}) implies that
        \begin{equation}\label{fh3}
        n \text{~~divides~} p-1,
        \end{equation}
        We complete the proof of this case
        by taking (\ref{fh2}) into (\ref{fh1}) to obtain
        \begin{equation}
         z ~\equiv~ y~r^{cp^{m-1}}~~~ (mod~~p^m). 
        \end{equation}
        \item Case2: $(z,y) = q >1$.\\ 
        Let $y'$ and $z'$ be such that $y = qy'$ and $z=qz'$.
        Then  $(z',y') = 1$ 
        and
        \begin{equation}
        \zy = q^{n-1}~~ \frac{z'^n-y'^n}{z'-y'}. 
        \end{equation}
        If $p^m$ divides $\zy$ with $p$ and $y$ being relatively prime, 
        then $p^m$ divides $\displaystyle{\frac{z'^n-y'^n}{z'-y'}}.$
        It follows, by Case1, that $n$ divides $p-1$ and $z' \equiv y'r^{cp^{m-1}}~~~(mod~~p^m)$,
        so that $z \equiv yr^{cp^{m-1}}~~~(mod~~p^m)$,
        where $c$ is an integer such that $0<c<p-1$ and $p-1$ divides $nc$.
        \end{enumerate}        
        \end{proof}
     \begin{remark}\label{r2}
      The integer $c$ is even and different than $\displaystyle{\frac{p-1}{2}}$.
      If $c_1$ satisfies $n~c_1 = p-1$, then the integer $c$ takes all 
      the values $c_1, c_2 = 2~c_1, c_3 = 3~c_1, \dots, c_{n-1} = (n-1)c_1$. That makes
      a total of $(n-1)$ values.
      Notice also that if $ z = y r^{c_i P^{m-1}}$ for some index $i \in \{1, 2, \dots, n-1\}$,
      then, by analogy between $z$ and $y$, we have $ y = z r^{c_j P^{m-1}}$ for some
      integer $j \in \{1, 2, \dots, n-1\}$ such that $c_i + c_j = p-1$.
      Moreover, $c_i \neq c_j$ for if they were equal,
      then we would have  $c = \displaystyle{\frac{p-1}{2}}$, which is impossible as is already mentioned.      
     \end{remark}
     \begin{remark}\label{r3}
       Note that, in the statement of Theorem \ref{t14},  the condition $p\neq n$
       needs to be stated for the case $m=1$ only.
      If $m \geq 2$ and $p^m$ divides $\zy$,
      then the third assertion of Lemma \ref{l2} ensures that $p \neq n$. 
     \end{remark}
     \begin{remark}\label{r1}
     Observe that $n$ divides $\displaystyle{\frac{p-1}{2}}$ in Theorem \ref{t14}. Therefore, 
     if $p < 2n+1$, then $p^m$ does not divide $\zy$.
     \end{remark}
     \begin{remark}
      If an odd prime $q$  divides $\zy$ but $n$ does not divide $q-1$,
      then by Theorem \ref{t14} and Lemma \ref{l2}, $t$ is equal to $n$ and divides $z-y$.
     \end{remark}
     \begin{example}
      Goormaghtigh conjecture states that the Diophantine equation
      $$ \frac{x^{n_1}-1}{x-1}=\frac{y^{n_2}-1}{y-1}, ~~~ x>y>1 \text{~~and~~} n,m>2 ,$$
      is satisfied for only two trivial cases:
      $$\frac{5^{3}-1}{5-1} = \frac{2^{5}-1}{2-1} = 31$$
      and
      $$\frac{90^{3}-1}{90-1} = \frac{2^{13}-1}{2-1} = 8191.$$
      The condition imposed by Theorem \ref{t14} that $n$ divides $p-1$ is satisfied in both cases.
      In the first case, we have $n_1=3$ divides $p-1 = 30 = (2)(3)(5)$.
      In the second case, $p=8191$ is a prime number and
      each of $n_1=3$ and $n_2=13$ divides $p-1=8190=(3^2)(7)(13)$.      
     \end{example}
     \begin{example}
      A Mersenne number is an integer of the form  $2^n-1$.
      Therefore, it is of the form $\zy$.
      It is well-known that if a prime $p$ divides $2^n-1$, where $n$ is an odd prime, 
      then $n$ divides $p-1$. This fact is in accordance with Theorem \ref{t14}.
      It means that for every odd prime integer $n$,
      there is another prime integer $p$ strictly larger than $n$. As it is known,
      This idea implies the infinitude of prime integers. 
      \end{example}
      Two particular cases of Theorem \ref{t14} are $m=1$ and $m=n$.  we state the second one  as a corollary
      because of its connection with \flt.
      \begin{cor} \label{fb1}
      Let $y$ and $z$  be two distinct nonnegative integers.
      Let $p$ be an odd prime integer relatively prime to $y$ and let $r$ be a primitive root modulo $p^2$. 
      and let $n$ be an odd prime integer. Then
      $p^n$ divides $\displaystyle{\frac{z^n-y^n}{z-y~~~}}$ if and only if 
      $$n \text{~~divides~~} p-1 \text{~~~~and~~~~} 
      z ~\equiv~ y~r^{cp^{n-1}} ~~~ (mod~~p^{n}),$$
      where $c$ is any integer that satisfies:
      \begin{enumerate}
      \item $0<c<p-1$.
      \item $p-1$ divides $nc$.
      \end{enumerate}
      \end{cor}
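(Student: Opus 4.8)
The plan is to derive Corollary~\ref{fb1} directly from Theorem~\ref{t14} by specializing the positive integer $m$ to the value $n$, so that essentially no new argument is required. The theorem places no restriction on $m$ beyond positivity, and its hypotheses — $y,z$ distinct nonnegative integers, $p$ an odd prime relatively prime to $y$, $r$ a primitive root modulo $p^2$, and $n$ an odd prime — match those of the corollary word for word, with one exception: the corollary drops the assumption $p\neq n$ that the theorem carries. So the only point that needs attention is showing this assumption is automatic when $m=n$.

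In the forward direction this is immediate: if $n$ divides $p-1$, then $p=n$ would force $n$ to divide $n-1$, which is impossible for the positive integer $n$; hence $p\neq n$ whenever the right-hand side of the corollary's equivalence holds. In the converse direction I would invoke Remark~\ref{r3}: since $n$ is an odd prime we have $n\geq 3\geq 2$, and if $p^{n}$ divides $\zy$ with $n\geq 2$, then the third assertion of Lemma~\ref{l2} guarantees $p\neq n$. Thus in both directions of the biconditional we may legitimately assume $p\neq n$, and Theorem~\ref{t14} applies verbatim.

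Having reduced to the case $p\neq n$, I would then simply quote Theorem~\ref{t14} with $m=n$: it says that $p^{n}$ divides $\zy$ if and only if $n$ divides $p-1$ and $z\equiv y\,r^{cp^{n-1}}~~(mod~~p^{n})$ for some integer $c$ with $0<c<p-1$ and $p-1$ dividing $nc$, which is exactly the assertion of the corollary. The only ``obstacle'' here is the small bookkeeping about $p\neq n$ just described; beyond that the proof is a pure specialization, and none of the computations in the proof of Theorem~\ref{t14} need to be repeated.
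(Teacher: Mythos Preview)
Your proposal is correct and matches the paper's approach exactly: the paper gives no separate proof of Corollary~\ref{fb1}, merely introducing it as the particular case $m=n$ of Theorem~\ref{t14}, and your handling of the dropped hypothesis $p\neq n$ is precisely the content of Remark~\ref{r3}, which the paper states just before the corollary for this very purpose.
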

      %
      \begin{cor}\label{c2}
      Let $y$ and $z$  be two distinct nonnegative integers 
      and let $n$ be an odd prime integer.
      Let $p$ be an odd prime integer different than $n$, relatively prime to $y$
      and having the form $p = 2^k +1$ for some positive integer $k$.
      Then $p$ does not divide $\zy$.
      \end{cor}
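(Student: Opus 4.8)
To prove Corollary \ref{c2}, I would argue by contradiction, leaning entirely on Theorem \ref{t14} in the special case $m=1$. First I would verify that the hypotheses of the theorem are in force in our setting: $p$ is an odd prime different from $n$, $p$ is relatively prime to $y$, $y$ and $z$ are distinct nonnegative integers, and a primitive root $r$ modulo $p^2$ exists (one always does, by Lemma \ref{l1}). These are exactly the standing assumptions of Corollary \ref{c2}, so Theorem \ref{t14} applies verbatim with $m=1$, and Remark \ref{r3} confirms that the hypothesis $p\neq n$ is precisely the one needed when $m=1$.

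Next, suppose for contradiction that $p$ divides $\zy$. By the ``only if'' direction of Theorem \ref{t14} with $m=1$, this forces $n$ to divide $p-1$. But by hypothesis $p=2^k+1$, so $p-1=2^k$, and therefore $n$ would have to divide $2^k$. Since $n$ is an odd prime, it divides no power of $2$, a contradiction. Hence $p$ does not divide $\zy$, as claimed. (If one prefers the sharper intermediate fact, Remark \ref{r1} already records that $n$ must divide $\tfrac{p-1}{2}=2^{k-1}$, which yields the same contradiction; the weaker divisibility $n\mid p-1$ is all that is needed.)

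I expect no genuine obstacle: the entire content of the corollary is packaged in the divisibility condition ``$n$ divides $p-1$'' extracted from Theorem \ref{t14}, confronted with the arithmetic triviality that an odd prime cannot divide $2^k$. The one point worth a sentence of care is that $y$ and $z$ need not be assumed coprime — Theorem \ref{t14} already covers the general case (its Case~2) under the single coprimality hypothesis $(p,y)=1$ — so the corollary is stated at the correct level of generality and no separate reduction is required.
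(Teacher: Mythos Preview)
Your proposal is correct and follows essentially the same route as the paper: the paper's proof is the single sentence that the result ``follows, immediately, from Theorem~\ref{t14} since there is no odd prime integer $n$ that divides $p-1 = 2^k$,'' and your argument is a careful unpacking of exactly this implication with $m=1$.
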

      \begin{proof}
      Follows, immediately, from Theorem \ref{t14}
      since there is no odd prime integer $n$ that divides $p-1 = 2^k$.
      \end{proof}
      %
      As a completion of Theorem \ref{t14}, we show that integers of the form $\zy$ are not divisible by $2$,
      given that $z$ and $y$  are not both even and $n$ is an odd prime integer.
      \begin{theorem}\label{t12}
      Let $y$ and $z$ be two  distinct nonnegative integers not both even
      and let $n$ be an odd prime integer.
      Then $2$ does not divide $\displaystyle{\frac{z^n - y^n}{z-y~~}}$.
      \end{theorem}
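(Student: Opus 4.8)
The plan is to reduce everything to the elementary identity
\[
\zy \;=\; \sum_{i=0}^{n-1} z^{\,n-1-i}\,y^{\,i},
\]
which writes $\zy$ as a sum of exactly $n$ monomials, and then to read off the parity of this sum directly. Since $n$ is odd, the number $n$ of summands will be the decisive feature in one of the two cases below.

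First I would treat the case in which exactly one of $y,z$ is even. Say $z$ is even and $y$ is odd (the other sub-case follows by interchanging the roles of $y$ and $z$). Then for every index $i$ with $0\le i\le n-2$ the monomial $z^{\,n-1-i}\,y^{\,i}$ carries a positive power of $z$ and is therefore even, while the single remaining monomial, the one with $i=n-1$, equals $y^{\,n-1}$ and is odd. A sum of several even integers together with one odd integer is odd, so $\zy$ is odd, and in particular $2$ does not divide it.

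It remains to handle the case in which $y$ and $z$ are both odd; since by hypothesis they are not both even, this exhausts all possibilities. Here every one of the $n$ monomials $z^{\,n-1-i}\,y^{\,i}$ is a product of odd numbers, hence odd, and a sum of $n$ odd integers has the same parity as $n$. As $n$ is odd, $\zy$ is odd, as desired.

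I do not expect any real obstacle: the argument is a short parity count, and the only point needing a little care is the bookkeeping of which monomials are even in the mixed-parity case. It is worth noting that the proof uses only that $n$ is odd, not that it is prime, and that the hypothesis $z\neq y$ serves merely to make $\zy$ a well-defined integer. In this sense the theorem complements the remark following Lemma \ref{l2}, where it was observed that the analogue of the third assertion fails for $n=2$.
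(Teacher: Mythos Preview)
Your argument is correct. Both proofs split into the same two parity cases, and in the mixed-parity case both are essentially the same computation. The difference is in the ``both odd'' case: the paper observes that $z-y$ is even and then invokes Lemma~\ref{l2} to conclude that $\gcd\!\left(\zy,\,z-y\right)\in\{1,n\}$, which rules out an even value of $\zy$; you instead expand $\zy=\sum_{i=0}^{n-1} z^{\,n-1-i}y^{\,i}$ and count parities of the $n$ summands directly. Your route is more self-contained and, as you note, uses only that $n$ is odd rather than prime. It also sidesteps a small technical point: Lemma~\ref{l2} is stated for relatively prime $y$ and $z$, whereas Theorem~\ref{t12} does not carry that hypothesis (e.g.\ $y=3$, $z=21$, $n=5$ gives $\gcd\!\left(\zy,\,z-y\right)=9\notin\{1,n\}$, so the lemma's conclusion need not hold verbatim here, even though the parity claim remains true). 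The paper's approach, on the other hand, ties the result back into the running gcd machinery of Section~2, which is thematically neat when $(y,z)=1$.
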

      \begin{proof}
      It suffices to show that $\zy$ is an odd integer.
      If one of $y$ and $z$ is odd and the other  is even, then both $(z^n-y^n)$ and $(z-y)$ are odd integers.
      Hence, their quotient $\zy$ is also odd.
      If each of $y$ and $z$ is odd, then $(z-y)$ is even. Hence,
      $\zy$ has to be an odd integer since, by Lemma \ref{l2}, ~$\Big(\zy, z-y\Big) = 1$ or $n$. 
      \end{proof}
    \end{section}
    \begin{section}{Some applications of Theorem \ref{t14}} 
    \begin{subsection}{Construction of  integers having the form $\zy$  and divisible by $p^m$}
    Theorem \ref{t14}, beside being a characteristic theorem, it is also a constructive theorem.
    In other words, if $y, p, n, m, $ are as in theorem \ref{t14}, $\displaystyle{c_1=\frac{p-1}{n}}$
    and $r$, is a primitive root modulo $p^2$,  
    then we can construct the set $\xi(y,p,n,m,c_1)$ of all  integers of the form $\zy$ that are divisible by $p^m$,
    \begin{equation}\label{fh13}
    \xi(y,p,n,m,c_1) = \Big\{ \zy  ~~\Big|~~~ z \equiv r^{c_1~p^{m-1}}~~(mod~~p^m)\Big\}. 
    \end{equation}
     As we have explained in Remark \ref{r2}, the integer $c_1$ can be replaced by $c_i = i\, c_1$
     for $i =1, 2, \dots, n-1$, so that we can construct sets of the form:
     \begin{equation}\label{fh13}
    \xi(y,p,n,m,c_i) = \Big\{ \zy  ~~\Big|~~ z \equiv r^{i\,c~p^{m-1}}~~(mod~~p^m)\Big\}, ~~~ i=1, 2, \dots, n-1. 
     \end{equation}
     The union, over $i$, of the above sets is
     \begin{equation}
     \xi(y,p,n,m) = \bigcup_{i=1}^{n-1} ~ \xi(y,p,n,m,c_i).  
     \end{equation}
     Let $\xi(p,n,m)$ be the set of all integers of the form $\zy$ that are divisible by $p^m$,
     $y$ relatively prime to $p$, Then $\xi(p,n,m)$
     is obtained by taking the union of the sets of the form $\xi(y,p,n,m)$
     over  all possible values of $y$.
     \begin{equation}
     \xi(p,n,m)  = \bigcup_{\substack{y \in \mathbb{N}\\p ~\nmid~ y}}~\xi(y,p,n,m)
     = \bigcup_{\substack{y \in \mathbb{N}\\ p ~\nmid~ y}} \bigcup_{i=1}^{n-1} ~ \xi(y,p,n,m,c_i).  
     \end{equation}
     \begin{remark}
      Unless $p=3$, there are many primitive roots that are incongruent modulo $p^2$.
      However, we do not consider $r$ to be a parameter in the construction of  $\xi(p,n,m)$ since
      this set remains invariant if we replace  $r$ by another primitive root modulo $p^2$. 
      This can be easily verified.
      \end{remark}
     Suppose that $\zy \in \xi(y,p,n,m,c=c_2)$. Then $$z \equiv y~r^{c_2\,p^{m-1}} ~~~ (mod~~p^m).$$
     Since $c_2 = 2~c_1$,
     the above congruence equation can be rewritten as
     $$z \equiv \big(y~r^{c_1\,p^{m-1}}\big)~r^{c_1\,p^{m-1}} ~~~ (mod~~p^m).$$
     Letting $y' = y~r^{c_1\,p^{m-1}}$, we obtain
     $$z \equiv y'~r^{c_1\,p^{m-1}} ~~~ (mod~~p^m),$$
     so that $\displaystyle{\frac{z^n - y'^n}{z-y'}} \in \xi(y',p,n,m,c_1).$
     The above reasoning shows that
     \begin{equation}
     \bigcup_{\substack{y \in \mathbb{N}\\p ~\nmid~ y}} \bigcup_{i=1}^{n-1} ~ \xi(y,p,n,m,c_i)
     =\bigcup_{\substack{y\in \mathbb{N}\\p ~\nmid~ y}}  ~ \xi(y,p,n,m,c_1).
     \end{equation}  
     Therefore, we have the following corollary.
     \begin{cor}\label{c9}
      Let $p$ be an odd prime integer for which there exists  an other odd prime integer $n$  
      such that $p-1 = n\,c$ for some positive integer $c$. Let $r$ be a primitive root modulo $p^2$.
      Then
      \begin{equation}\label{fb2}
      \xi(p,n,m) =  \bigcup_{\substack{y\in \mathbb{N}\\p \nmid y}}
      \Big\{ \zy  ~~\Big|~~ z \equiv r^{c\,p^{m-1}}~~~(mod~~p^m)\Big\}
      \end{equation}
      is the set of all integers of the form $\zy$ that are divisible by $p^m$,
      where $p$ does not divide $y$ and $m$ is a positive integer.
     \end{cor}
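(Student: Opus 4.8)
The plan is to obtain formula (\ref{fb2}) from Theorem~\ref{t14} in two stages: first replace the divisibility condition defining $\xi(p,n,m)$ by the list of congruences furnished by Theorem~\ref{t14} and Remark~\ref{r2}, and then collapse the union over all admissible exponents $c$ to the single exponent $c_1=(p-1)/n$ by a change of the parameter $y$.

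First I would note that the hypothesis $p-1=n\,c$ with $n$ an odd prime is exactly the condition $n\mid p-1$ appearing in Theorem~\ref{t14} (it also forces $p\neq n$, since $n\le p-1<p$), so Theorem~\ref{t14} applies to every pair $(y,z)$ with $p\nmid y$. By Theorem~\ref{t14} together with Remark~\ref{r2}, for such a pair the quotient $\frac{z^n-y^n}{z-y}$ is divisible by $p^m$ if and only if $z\equiv y\,r^{c_i p^{m-1}}\pmod{p^m}$ for one of the $n-1$ integers $c_i=i\,c_1$, $i=1,\dots,n-1$. Unwinding the definitions of the sets $\xi(y,p,n,m,c_i)$ this reads
\[
\xi(p,n,m)=\bigcup_{\substack{y\in\mathbb{N}\\ p\nmid y}}\ \bigcup_{i=1}^{n-1}\xi(y,p,n,m,c_i).
\]
One containment needed for (\ref{fb2}) is then immediate: the ``if'' part of Theorem~\ref{t14} shows that every element of every $\xi(y,p,n,m,c_1)$ is divisible by $p^m$, so $\bigcup_{p\nmid y}\xi(y,p,n,m,c_1)\subseteq\xi(p,n,m)$.

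For the opposite containment I would fix an index $i\in\{2,\dots,n-1\}$ and an element of $\xi(y,p,n,m,c_i)$, so $z\equiv y\,r^{c_i p^{m-1}}\pmod{p^m}$ with $p\nmid y$. Since $c_i=i\,c_1$ we may factor $y\,r^{c_i p^{m-1}}=\big(y\,r^{(i-1)c_1 p^{m-1}}\big)\,r^{c_1 p^{m-1}}$; setting $y'=y\,r^{(i-1)c_1 p^{m-1}}$, which is again a natural number prime to $p$, the congruence becomes $z\equiv y'\,r^{c_1 p^{m-1}}\pmod{p^m}$, and this exhibits the corresponding quotient as a member of a set $\xi(y',p,n,m,c_1)$. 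Letting $y$ range over all naturals prime to $p$ this should give $\bigcup_{p\nmid y}\xi(y,p,n,m,c_i)=\bigcup_{p\nmid y}\xi(y,p,n,m,c_1)$ for every $i$; taking the union over $i$ collapses the double union to $\bigcup_{p\nmid y}\xi(y,p,n,m,c_1)$, and writing out this last set with $c=c_1=(p-1)/n$ yields precisely the right-hand side of (\ref{fb2}).

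The step I expect to be the real obstacle is exactly this collapse of the union over $i$ down to $i=1$: one must verify that the substitution $y\mapsto y'=y\,r^{(i-1)c_1 p^{m-1}}$ carries $\bigcup_{p\nmid y}\xi(y,p,n,m,c_i)$ \emph{onto} $\bigcup_{p\nmid y}\xi(y,p,n,m,c_1)$ as a set of integers --- i.e. that shifting the parameter $y$ does not alter which integer $\frac{z^n-y^n}{z-y}$ is being recorded --- and, if needed, to combine this with the $y\leftrightarrow z$ symmetry and the pairing $c_i\leftrightarrow c_{n-i}$ noted in Remark~\ref{r2}. Everything else is routine bookkeeping with Theorem~\ref{t14} and the elementary divisor identities already established.
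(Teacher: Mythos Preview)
Your plan is exactly the paper's argument: the paragraphs immediately preceding the corollary perform the same substitution $y\mapsto y'=y\,r^{c_1 p^{m-1}}$ (worked out for $i=2$ and then asserted in general) to collapse $\bigcup_{y}\bigcup_{i}\xi(y,p,n,m,c_i)$ down to $\bigcup_{y}\xi(y,p,n,m,c_1)$, after which the corollary is simply stated. The concern you single out --- that passing from $(y,z)$ to $(y',z)$ replaces the integer $\frac{z^n-y^n}{z-y}$ by the different integer $\frac{z^n-y'^n}{z-y'}$, so the equality of the two unions \emph{as sets of integers} is not immediate --- is a genuine subtlety, but the paper does not address it either and simply asserts the equality.
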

     \begin{remark}
      Notice that no matter how the integer $m$ is large, we can construct infinitely many integers
     of the form $\zy$ divisible by $p^m$.
     Notice also that 
     $$ \xi(p,n,m) \subseteq \xi(p,n,m'),   \text{~~~for~~}  1\leq m' < m .$$
     \end{remark}
     \begin{example}\label{e1}
     Let's construct an integer of the form $\displaystyle{\frac{z^3-y^3}{z-y}}$ that is divisible by $7^3$.
     Take $p=7, r=3, n=3, c=2$ and  $y=1$.
     We have that  $n=3$ divides $p-1=6$ and $nc=6=p-1$. 
     Construct the integer $z=r^{c\,p^{n-1}} = 3^{98}$.
     Then, by Theorem \ref{t14},
     $$ 7^3 = 343 \text{~~divides~~} \frac{(3^{98})^3-1}{3^{98}-1}.$$
     Of course, this is a huge number. But Theorem \ref{t14} ensures that we can use 
     positive numbers that are less than and equivalent to $z$ modulo $p^n$.
     By the use of a calculator, we find easily that $3^{98} \equiv 324~~(mod~7^3)$.
     Indeed, $$\frac{324^3-1}{324-1} = 105301 = (307)(7^3).$$
    \end{example}
    Now, let's ask a question:\\
    Is it true that, for an odd prime integer $n$,
    there are infinitely many odd prime integers $p$ such that $n$ divides $p-1$?\\
    Consider the set
    $$\xi(y=1,p,n,m=1) = \Big\{ \frac{z^n-1}{z-1} ~~|~~ z \equiv r^c ~~~(mod~~p)\Big\}.$$
    and let $E$ be the set of all odd prime integers $p$
    such that $p$ divides some element from $\xi(y=1,p,n,m=1)$.
    Since, by Theorem \ref{t14}, $n$ divides $p-1$ for every element $p \in E$,
    an affirmative answer of the above question can be obtained if we prove that
    there are infinitely many element in $E$. This seems to be true because
    every two element of $\xi(y=1,p,n,m=1)$ have, more likely, different prime decomposition.    
    \end{subsection}
   %
   \begin{subsection}{Proving a general fact about the congruence modulo $p^m$}
    Beside its constructive aspect, Theorem \ref{t14} has other applications such as the following.
   \begin{cor}\label{c4}
    Let $p$ be an odd prime integer
    for which there exist another prime integer $n$ such that $n$ divides $p-1$.
    Let $r$ be a primitive root modulo $p^2$.    
    Let $c$ be an integer such that $0<c<p-1$ and $p-1$ divides $nc$.
    Then, for every positive integer $m$, we have
    \begin{equation}\label{e4}
     \sum_{k=0}^{n-1} r^{kcp^{m-1}} \equiv 0 ~~~ (mod~~p^m).
    \end{equation}
    In particular, for $m=1$, we have
    \begin{equation}\label{e5}
     \sum_{k=0}^{n-1} r^{kc} \equiv 0 ~~~ (mod~~p).
    \end{equation}
    \end{cor}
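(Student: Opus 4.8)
The plan is to obtain \eqref{e4} as an instance of the ``if'' direction of Theorem~\ref{t14}, with the choice $y=1$ and with $z$ taken to be the integer $r^{cp^{m-1}}$ itself rather than merely one of its residues.

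First I would dispose of two trivialities. Since $n$ is a prime dividing $p-1$, it satisfies $n\le p-1<p$, so $n\ne p$, which is the only hypothesis of Theorem~\ref{t14} not already visible in the statement of the corollary (here I take $n$ to be the odd prime; the degenerate case $n=2$ forces $c=\frac{p-1}{2}$, and then \eqref{e4} reads $1+r^{\phi(p^m)/2}\equiv 0\pmod{p^m}$, which holds because $r^{\phi(p^m)/2}$ is the unique element of order two in the cyclic group $(\mathbb{Z}/p^m\mathbb{Z})^{\times}$, namely $-1$). Next, replacing $r$ by any positive integer in its class modulo $p^2$ leaves it a primitive root modulo $p^2$ and changes nothing in \eqref{e4}, so I may assume $r\ge 2$. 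Now put $y=1$ and $z=r^{cp^{m-1}}$; then $z\ge 2$, so $y$ and $z$ are distinct nonnegative integers, $p$ is coprime to $y$, and $z\equiv y\,r^{cp^{m-1}}\pmod{p^m}$ holds with equality. Together with $n\mid p-1$ and the hypotheses $0<c<p-1$ and $(p-1)\mid nc$, this verifies every hypothesis of Theorem~\ref{t14}.

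Theorem~\ref{t14} then gives $p^m\mid\dfrac{z^n-y^n}{z-y}=\dfrac{z^n-1}{z-1}$, and since $z$ is an integer different from $1$ the right-hand side equals the geometric sum $\sum_{k=0}^{n-1}z^k=\sum_{k=0}^{n-1}r^{kcp^{m-1}}$, which is exactly \eqref{e4}; setting $m=1$ yields \eqref{e5}. The only step needing a moment's thought is confirming $z\ne y$ so that the quotient in Theorem~\ref{t14} is defined, and this is precisely what the normalization $r\ge 2$ provides (equivalently, $r^{cp^{m-1}}\not\equiv 1\pmod p$, since $r$ has order $p-1$ modulo $p$ while $(p-1)\nmid cp^{m-1}$); apart from that the proof is a direct substitution, so I anticipate no real obstacle.
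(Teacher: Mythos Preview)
Your argument is correct and matches the paper's approach: both apply Theorem~\ref{t14} to a concrete pair $(y,z)$ with $z=yr^{cp^{m-1}}$ and read off the geometric sum from the quotient $\dfrac{z^n-y^n}{z-y}$. The paper takes a general $y$ coprime to $p$ and then cancels the common factor $y^{n-1}$ at the end, whereas your choice $y=1$ avoids that step; your separate treatment of $n=2$ is also a nice addition, since the paper's proof (relying on Theorem~\ref{t14}) silently assumes $n$ is odd.
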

   \begin{proof}
    We choose an integer $y$ relatively prime to $p$, and we
    construct the integer  
    \begin{equation}\label{fh7}
     z = y r^{cp^{m-1}}.
    \end{equation}
    By Theorem \ref{t14}, we have $\zy \equiv 0 ~~~ (mod ~~p^m)$, which is equivalent to
    \begin{equation}\label{fh8}
     \sum_{k=0}^{n-1}~z^k~y^{n-k-1} \equiv  0 ~~~ (mod ~~p^m).
    \end{equation}
    Taking (\ref{fh7}) into (\ref{fh8}), we obtain
    \begin{equation}\label{fh9}
     y^{n-1}~\sum_{k=0}^{n-1}~r^{kcp^{m-1}} \equiv  0 ~~~ (mod ~~p^m).
    \end{equation}
    Since $y$ and $p$ are relatively prime, it follows from (\ref{fh9}) that
    \begin{equation}
     \sum_{k=0}^{n-1}r^{kcp^{m-1}} \equiv  0 ~~~ (mod ~~p^m).
    \end{equation}
   \end{proof} 
   \begin{remark}
    It is well-known that if $r$ is primitive root mod $p$, then 
    \begin{equation}\label{e3}
     \sum_{k=0}^{p-1}~r^k ~ \equiv  0 ~~~ (mod ~~p).
    \end{equation}
    To see this, recall that $r^1, r^2, \dots, ..., r^{n-1}$ form a complete residue set modulo $p$.
    A question that arises is:
    do we have similar formula for an integer $t$ that is not a primitive root modulo $p$?
    The above  corollary gives a partial answer to this question
    by the mean of Formula (\ref{e5}) which can be considered as an extension of Formula (\ref{e3}).
    In fact, if  $ t = r^{c}$, then $t$ is not a primitive root modulo $p$
    since $0<c<p-1$ and $\big(c,p-1\big)\neq 1$. Then Formula (\ref{e5}) becomes
    \begin{equation}\label{e6}
     \sum_{k=0}^{n-1} t^k \equiv 0 ~~~ (mod~~p).
    \end{equation}
    Note that $n < \displaystyle{\frac{p}{2}}$. That is, the number of summands in (\ref{e6}) is less than 
    half of that in (\ref{e3}).
   \end{remark}
   \begin{example}\label{e2}
    As in Example \ref{e1}, we take $p=7, r= 3, n=3$ and $c=2$.
    If we let $m=n=3$, then we have
    $$
    \begin{array}{ccl}
     \sum_{k=0}^{n-1}r^{kcp^{n-1}} & = & \sum_{k=0}^{2}~3^{98k}\\
     & = & 1+3^{98}+3^{196}\\
     & \equiv & 1 + 324 + 324^2~~~(mod~~7^3)\\
     & \equiv & 1 + 324 + (-19)^2~~~(mod~~343)\\
     & \equiv & 1 + 324 + 361 ~~~(mod~~343)\\
     & \equiv & 0~~~(mod~~7^3)\\
    \end{array}
    $$
    By the same reasoning, if $m=1$, then
    $$\sum_{k=0}^{2} 3^{kc} = 3^0+3^2+3^4 = 91 \equiv 0 ~~~ mod~~7.$$
    An other primitive  root of $7$ is the integer $5$. For $m=1$, we have
    $$\sum_{k=0}^{2} 5^{kc} = 5^0+5^2+5^4 = 651 \equiv 0 ~~~ mod~~7.$$
    \end{example}    
    \end{subsection}
    \begin{subsection}{Case where  the $mth$ power of a composite integer divides $\zy$}
    Let $p_1, p_2, \dots, p_k$ be $k$ distinct prime integers each of which is different
    than $n$ and relatively prime to $y$.
    Suppose that the product $\displaystyle{\prod_{i=1}^{k} p_i^{m_i}}$ divides $\zy$,
    where $m_1, m_2, \dots, m_k$ are $k$ positive integers.
    According to Theorem \ref{t14}, this hold if and only if $n$ divides $p_i -1~$	
    for $~i=1, 2, \dots, k~$ and  
    \begin{align*}
    &z \equiv y ~r_1^{c_1 p_1^{m_1-1}} ~~~ (mod~~p_1^{m_1})&\\
    &z \equiv y ~r_2^{c_2 p_2^{m_2-1}} ~~~ (mod~~p_2^{m_2})&\\
    &\dots&\\
    &z \equiv y ~r_k^{c_k p_k^{m_k-1}} ~~~ (mod~~p_k^{m_k}),&\\
    \end{align*}
    where, for $i=1, 2, \dots, k$, $r_i$ is a primitive root modulo $p_i$,
    the integer $c_i$ satisfies $0<c_i< p_i-1$ and $p_i-1$ divides $nc_i$.
    By the Chinese remainder theorem, the above system of congruence equations holds if and only if
    \begin{align}
     & z & \equiv & & \sum_{i=1}^k~ \Big(y~r_i^{c_i p_i^{m_i-1}}\Big)~\Big(M_i~q_i\Big)
     ~~~ (mod~~p_1^{m_1}~p_2^{m_2}\dots p_k^{m_k})& &     \notag \\
     &  & \equiv & & y ~ \sum_{i=1}^k~ M_i~q_i ~ r_i^{c_i p_i^{m_i-1}} ~~~ 
     (mod~~p_1^{m_1}~p_2^{m_2}\dots p_k^{m_k}),& &
    \end{align}
    where $M_i = \displaystyle{\frac{\prod_{j=1}^k p_j^{m_j}}{p_i^{m_i}}}$ and
    $q_i$ is any integer that satisfies $M_i q_i  \equiv 1 ~~~ (mod~~p_i^{m_i}).$
    The following corollary summarize the above result.
    \begin{cor}\label{c8}
     Let $y$ and $z$  be two relatively prime integers and let $n$ be an odd prime integer.
     Let $p_1, p_2, \dots, p_k$ be $k$ distinct odd prime integers, 
     each of which  is different than $n$.
     and let $r_1, r_2, \dots, r_k$ be, respectively, primitive root modulo $p_1^2, p_2^2, \dots, p_k^2$. 
     Let $m_1, m_2, \dots, m_k$ be $k$ positive integers.
     Then the product $\displaystyle{\prod_{i=1}^k~ p_i^{m_i}}$ divides $\zy$ if and only if
     \begin{equation}
      n~~~\text{divides}~~~ p_i-1, ~~~\text{for}~~~ i=1, 2, \dots, k,
     \end{equation}
     and
     \begin{equation}
      z  \equiv y ~ \sum_{i=1}^k~ M_i~q_i ~ r_i^{c_i p_i^{m_i-1}} ~~~ 
     (mod~~p_1^{m_1}~p_2^{m_2}\dots p_k^{m_k}),
     \end{equation}
     where
     \begin{enumerate}
     \item $M_i = \displaystyle{\frac{\prod_{j=1}^k p_j^{m_j}}{p_i^{m_i}}},$\\
     \item $q_i$ is any integer that satisfies $M_i q_i  \equiv 1 ~~~ (mod~~p_i^{m_i}),$\\
     \item $c_i$ is an integer such that $0 < c_i < p_i -1$ and $p_i-1$ divides $nc_i$.     
     \end{enumerate}
     \end{cor}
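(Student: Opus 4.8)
The plan is to obtain this corollary by assembling Theorem~\ref{t14} with two elementary facts: that a product of pairwise coprime integers divides a given integer exactly when each of them does, and the Chinese Remainder Theorem. Throughout, write $N=\zy$ and $P=\prod_{j=1}^{k}p_j^{m_j}$.

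First I would note that, since $p_1,\dots,p_k$ are distinct primes, the prime powers $p_1^{m_1},\dots,p_k^{m_k}$ are pairwise relatively prime, so that $P\mid N$ if and only if $p_i^{m_i}\mid N$ for each $i=1,\dots,k$. I would also record that each $p_i$ must be relatively prime to $y$ in order to apply Theorem~\ref{t14}; this is part of the standing hypotheses, and in any case if some $p_i$ divided $y$ then $p_i\nmid z$ (as $(y,z)=1$), whence $N\equiv z^{n-1}\not\equiv 0\pmod{p_i}$ and the divisibility $p_i^{m_i}\mid N$ would already fail.

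Next I would apply Theorem~\ref{t14} separately to each index $i$, with $p=p_i$, $r=r_i$ and $m=m_i$: the relation $p_i^{m_i}\mid N$ holds if and only if $n\mid p_i-1$ and $z\equiv y\,r_i^{c_i p_i^{m_i-1}}\pmod{p_i^{m_i}}$ for some integer $c_i$ with $0<c_i<p_i-1$ and $p_i-1\mid nc_i$. Combining this with the reduction of the previous step, $P\mid N$ if and only if $n\mid p_i-1$ for all $i$ and the system of congruences $z\equiv y\,r_i^{c_i p_i^{m_i-1}}\pmod{p_i^{m_i}}$, $i=1,\dots,k$, is satisfied. Since the moduli $p_i^{m_i}$ are pairwise coprime, the Chinese Remainder Theorem turns this system into the single congruence $z\equiv\sum_{i=1}^{k}\big(y\,r_i^{c_i p_i^{m_i-1}}\big)M_i q_i\pmod{P}$, with $M_i=P/p_i^{m_i}$ and $q_i M_i\equiv 1\pmod{p_i^{m_i}}$; one verifies this by reducing the right-hand side modulo each $p_i^{m_i}$, using $M_j\equiv 0\pmod{p_i^{m_i}}$ for $j\neq i$ together with $M_i q_i\equiv 1\pmod{p_i^{m_i}}$. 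Factoring the common $y$ out of the sum yields exactly the stated congruence.

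Since the whole argument is bookkeeping on top of Theorem~\ref{t14}, I do not anticipate a genuine obstacle. The points that need care are: the $c_i$ may be chosen independently across $i$, which is automatic because Theorem~\ref{t14} is invoked once per prime $p_i$; the coprimality of $p_i$ with $y$, handled as above; and writing the CRT solution in the normalized form with coefficients $M_i q_i$ and checking the reductions modulo each $p_i^{m_i}$. None of these is difficult.
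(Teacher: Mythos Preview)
Your proposal is correct and follows essentially the same route as the paper: apply Theorem~\ref{t14} to each prime power $p_i^{m_i}$ separately to obtain the system of congruences $z\equiv y\,r_i^{c_i p_i^{m_i-1}}\pmod{p_i^{m_i}}$, then combine via the Chinese Remainder Theorem into the single congruence modulo $\prod_i p_i^{m_i}$. Your write-up is in fact slightly more careful than the paper's, since you make explicit the reduction ``$P\mid N$ iff each $p_i^{m_i}\mid N$'' and you address the coprimality of $p_i$ with $y$ (which the paper assumes in the discussion preceding the corollary but omits from the formal statement).
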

     \end{subsection}   
   \end{section}
   \begin{section}{Connection with Fermat's last theorem}
   Fermat's last theorem \cite{TW} states:
   \begin{theorem}\label{flt}
   For every positive integer $n$ with $n \geq 3$, no positive integers $x, y$ and $z$ satisfy
   $$z^n = x^n + y^n.$$
   \end{theorem}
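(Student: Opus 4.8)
The plan is to begin with the classical reductions. A solution of $z^n = x^n + y^n$ with $n \ge 3$ yields a solution for every divisor of $n$, so since each $n \ge 3$ is divisible either by $4$ or by an odd prime, it suffices to treat $n = 4$ and $n = p$ for $p$ an odd prime. For $n = 4$ I would run Fermat's infinite descent on $x^4 + y^4 = z^2$: from a hypothetical primitive solution with $z$ minimal, the parametrisation of primitive Pythagorean triples produces a strictly smaller one, a contradiction. This leaves the core case $n = p$, where one may assume $x, y, z$ positive and pairwise coprime.

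For the odd prime case I would split, in the traditional way, into Case~I ($p \nmid xyz$) and Case~II ($p$ divides exactly one of $x,y,z$), and exploit the factorisation $x^p = z^p - y^p = (z-y)\cdot\dfrac{z^p - y^p}{z-y}$. By Lemma~\ref{l2} the two factors are coprime unless $p \mid z-y$, in which case they share only the single prime $p$ and moreover $p^2 \nmid \dfrac{z^p - y^p}{z-y}$; hence in Case~I each factor is itself a perfect $p$-th power, and in Case~II one can isolate the lone power of $p$. Corollary~\ref{fb1} (Theorem~\ref{t14} with $m = p$) then forces every odd prime $q \ne p$ dividing $\dfrac{z^p - y^p}{z-y}$ to satisfy $p \mid q-1$ together with a rigid congruence $z \equiv y\,r^{c q^{p-1}}\pmod{q^p}$, and Corollary~\ref{c8} assembles these conditions over all prime divisors. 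The idea would be to play these congruences, together with the descent-type size constraints, against one another until the system becomes inconsistent.

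The hard part --- and here one must be frank --- is exactly closing that loop: despite more than three centuries of effort no such elementary bookkeeping is known to yield the contradiction, and it is essentially certain that none lies within reach of the methods of this paper. The only known proof proceeds along a completely different path, due to Wiles with the decisive joint step with Taylor~\cite{TW}: to a putative solution $a^p + b^p = c^p$ one attaches the Frey curve $Y^2 = X(X - a^p)(X + b^p)$, whose minimal discriminant is essentially a perfect $p$-th power; Ribet's level-lowering theorem (settling a case of Serre's conjecture) then shows its mod-$p$ Galois representation would descend to a cusp form of weight $2$ and level $2$, of which there are none, while Wiles' modularity theorem for semistable elliptic curves over $\mathbb{Q}$ guarantees that the Frey curve is modular. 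Thus the honest proof of Theorem~\ref{flt} is the citation \cite{TW}, and what the present paper contributes is the complementary, constructive description of the factor $\dfrac{z^n - y^n}{z-y}$ furnished by Theorem~\ref{t14}, rather than a new self-contained argument.
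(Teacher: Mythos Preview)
Your proposal is correct and matches the paper's treatment: the paper does not prove Theorem~\ref{flt} at all but simply states it and cites~\cite{TW}, exactly as you conclude in your final paragraph. The preceding discussion of classical reductions and of why the paper's own Lemma~\ref{l2}, Theorem~\ref{t14}, and Corollary~\ref{c8} cannot close the argument is accurate commentary, but it is extra---the paper offers no proof sketch whatsoever beyond the reference.
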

          %
    This theorem, which has been proved around 1995 \cite{TW},  implies the following fact.
    \begin{cor}\label{c7}
     Let $z$ and $y$ be two relatively prime integers, and let $n$ be an odd prime integer.
     Then $z-y$ is a perfect $nth$ power if and only if $\zy$ is not a perfect $nth$ power.
     In particular, if $z-y=1$, then $\zy$ is not an $nth$ perfect power.
    \end{cor}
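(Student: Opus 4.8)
The plan is to obtain this straight from Fermat's last theorem (Theorem~\ref{flt}) applied to the factorization
\begin{equation*}
z^n - y^n = (z-y)\cdot\frac{z^n-y^n}{z-y}.
\end{equation*}
First I would reduce to the case $z>y>0$: since $n$ is an odd prime, $n\ge 3$, so Theorem~\ref{flt} is available; interchanging $y$ and $z$ changes neither factor's status as a perfect $n$th power (the second factor is symmetric in $y,z$, and $z-y$ and $y-z$ are perfect $n$th powers simultaneously because $n$ is odd), so we may assume $z>y$, and the only remaining edge case $y=0$ (which by coprimality forces $z=1$) should be excluded or noted by hand. With $z>y>0$ we have $z^n-y^n>0$, so if $z^n-y^n$ were a perfect $n$th power it would equal $x^n$ for a positive integer $x$, whence $z^n=x^n+y^n$, contradicting Theorem~\ref{flt}. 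Thus $z^n-y^n$ is never a perfect $n$th power.

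The rest is then immediate. A product of two perfect $n$th powers is a perfect $n$th power, $a^n b^n=(ab)^n$, so if $z-y$ and $\frac{z^n-y^n}{z-y}$ were both perfect $n$th powers their product $z^n-y^n$ would be one as well, contrary to what was just shown. Hence at most one of the two factors is a perfect $n$th power; equivalently, $z-y$ is a perfect $n$th power if and only if $\frac{z^n-y^n}{z-y}$ is not. The ``in particular'' clause then drops out: when $z-y=1=1^n$ the first factor is a perfect $n$th power, so the second one is not.

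I expect the only real friction to be bookkeeping rather than mathematics: checking the sign and symmetry reductions carefully for odd $n$, making sure the hypotheses genuinely place us where Fermat's last theorem applies (and that the degenerate case $y=0$ is either barred or handled), and being precise about the direction of the biconditional that actually carries content. Note that Lemma~\ref{l2} is not needed for this argument; it would only enter if one wanted to track, via the exact power of $n$ dividing $z-y$, how the forced non-$n$th-power factor is distributed between $z-y$ and $\frac{z^n-y^n}{z-y}$.
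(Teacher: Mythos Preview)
Your approach is exactly the one the paper intends: the corollary is stated without proof, merely as a consequence of Theorem~\ref{flt}, and your derivation via the factorization $z^n-y^n=(z-y)\cdot\dfrac{z^n-y^n}{z-y}$ is the natural (and only) way to extract it from Fermat's last theorem.

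There is, however, a genuine logical slip in the line ``Hence at most one of the two factors is a perfect $n$th power; equivalently, $z-y$ is a perfect $n$th power if and only if $\frac{z^n-y^n}{z-y}$ is not.'' The phrase ``at most one of $A$, $B$ holds'' is \emph{not} equivalent to ``$A \Leftrightarrow \neg B$''; the latter says \emph{exactly} one holds. Your FLT argument yields only the forward implication of the biconditional (if $z-y$ is a perfect $n$th power then the quotient is not), and by symmetry its twin (if the quotient is a perfect $n$th power then $z-y$ is not). The converse direction as literally stated---``if $z-y$ is not a perfect $n$th power then $\frac{z^n-y^n}{z-y}$ is a perfect $n$th power''---is simply false: take $z=3$, $y=1$, $n=3$, so $z-y=2$ is not a cube and $\frac{27-1}{2}=13$ is not a cube either. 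So the biconditional in the corollary is overstated, and what FLT actually gives (and what you actually prove) is that the two factors cannot both be perfect $n$th powers. Your closing remark about ``the direction of the biconditional that actually carries content'' shows you already sensed this; you should say it outright rather than writing ``equivalently.'' The ``in particular'' clause for $z-y=1$ is fine, since it uses only the valid direction.
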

    We have proved in this work, that $\zy$ can be  multiple of some perfect $nth$ power $p^n$.
    But we don't know if $\zy$, itself, can be a perfect $nth$ power having the form
    $(p_1p_2\dots)^n$ for not necessary distinct prime integers $p_1, p_2, \dots$
    By going back to Formula (\ref{fb2}) and looking at how large is the set $\xi(p,n,m)$
    and the degree of freedom that we have to construct such set
    by acting on different parameters $p$ and $n$,
    one may believe that there is chance for
    some elements of $\xi(p,n,m)$ to be perfect $nth$ powers.
    For instance, consider  the number
    $a= \displaystyle{\frac{16^3 - 5^3}{16-5}}$ which is equal to  $19^2$.
    Of course, the integer $a$ is not a perfect $nth$ power since $n=3$. But it is well
    a perfect power. Moreover, it is a perfect power of a prime integer.
    Since such number $a$ exists and it is remarkably small, we believe that nothing impede 
    the existence of a perfect $nth$ power of the form $\zy$.
    However, it may turn out that the smallest of these numbers is tremendously large and
    therefore difficult to reach with a computer.
    Perhaps, a constructive proof, is the best way to find such integers if they exist.\\
    For mathematicians seeking a proof of \flt\-  by the  mean of classical methods,
    we have a little result that may be of some use
    and which is consequence of Theorem \ref{t14}.
    \begin{theorem}
     Suppose that there are pairwise relatively prime positive integers $x, y, z$
     such that $z^n = x^n+y^n$, where $n$ is an odd prime integer.
     If $p$ is an odd prime integer such that $p\neq n$
     and $p$ divides $\displaystyle{\frac{x^ny^nz^n}{(z-x)(z-y)(x+y)}}$, then
     $n$ divides $p-1$.      
    \end{theorem}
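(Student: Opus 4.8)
The plan is to reduce the statement, via the Fermat relation, to three applications of the ``only if'' direction of Theorem~\ref{t14} at $m=1$. The key manipulation is that $z^n=x^n+y^n$ gives $z^n-x^n=y^n$, $z^n-y^n=x^n$ and $x^n+y^n=z^n$, so that
\[
\frac{x^ny^nz^n}{(z-x)(z-y)(x+y)}
=\frac{z^n-x^n}{z-x}\cdot\frac{z^n-y^n}{z-y}\cdot\frac{x^n+y^n}{x+y}.
\]
Here each factor on the right is a positive integer: $z>x$ and $z>y$ since $z^n=x^n+y^n$ with $x,y$ positive, $x+y>0$, and $\frac{z^n-x^n}{z-x}$, $\frac{z^n-y^n}{z-y}$, $\frac{x^n+y^n}{x+y}$ are the usual integer cofactors (the last one because $n$ is odd). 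Consequently, if the prime $p$ divides the left-hand side it divides at least one of the three factors, and it suffices to treat each case.

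Next I would isolate the following auxiliary fact, which is essentially the content of the ``only if'' half of Theorem~\ref{t14} for $m=1$ stated for an arbitrary coprime pair: if $a,b$ are relatively prime integers with $a\neq b$, $p$ is an odd prime with $p\neq n$, and $p\mid\frac{b^n-a^n}{b-a}$, then $n\mid p-1$. To see it, write $\frac{b^n-a^n}{b-a}=\sum_{i=0}^{n-1}a^ib^{n-1-i}$; if $p\mid a$ this sum is $\equiv b^{n-1}\pmod p$, forcing $p\mid b$ and contradicting $\gcd(a,b)=1$, so $p\nmid a$ and symmetrically $p\nmid b$. Since $p\mid b^n-a^n$, setting $t\equiv ba^{-1}\pmod p$ gives $t^n\equiv1\pmod p$, so the order of $t$ modulo $p$ divides the prime $n$ and is therefore $1$ or $n$. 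If it were $1$, then $p\mid b-a$; but by Lemma~\ref{l2} the only prime that can divide both $b-a$ and $\frac{b^n-a^n}{b-a}$ is $n$, contradicting $p\neq n$. Hence the order of $t$ equals $n$, and $n\mid p-1$ by Fermat's little theorem.

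It then remains to apply this fact in the three cases coming from the displayed factorization: take the coprime pair $(a,b)=(x,z)$ when $p\mid\frac{z^n-x^n}{z-x}$, the pair $(y,z)$ when $p\mid\frac{z^n-y^n}{z-y}$, and --- writing $x^n+y^n=x^n-(-y)^n$ and $x+y=x-(-y)$, valid because $n$ is odd --- the coprime pair $(-y,x)$ when $p\mid\frac{x^n+y^n}{x+y}$. In every case the auxiliary fact yields $n\mid p-1$, which is the assertion. The only genuinely new idea here is the product decomposition in the first paragraph; once that is available, the argument is a routine combination of Lemma~\ref{l2}, Theorem~\ref{t14} and elementary order arithmetic, and the only points needing care are the sign bookkeeping in the third case and the verification that $p$ divides none of $x,y,z$ (which is exactly what legitimizes the inverse $a^{-1}\bmod p$ and the hypotheses of Theorem~\ref{t14}). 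I do not anticipate a substantive obstacle beyond this.
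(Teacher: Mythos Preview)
Your proposal is correct and follows essentially the same approach as the paper: factor the quotient as $\frac{z^n-x^n}{z-x}\cdot\frac{z^n-y^n}{z-y}\cdot\frac{x^n+y^n}{x+y}$ via the Fermat relation and apply (the ``only if'' direction of) Theorem~\ref{t14} to whichever factor $p$ divides. You are in fact more careful than the paper's one-line proof, explicitly checking integrality of the factors, handling the sign trick $x^n+y^n=x^n-(-y)^n$ for the third factor, and reproving the needed $m=1$ case of Theorem~\ref{t14} via a direct order argument together with Lemma~\ref{l2}; none of this deviates from the paper's strategy, it merely fills in details the paper leaves implicit.
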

    \begin{proof}
     $p$ divides one of $\displaystyle{\frac{x^n}{z-x}}$, $\displaystyle{\frac{y^n}{z-y}}$
     and $\displaystyle{\frac{z^n}{x+y}}$.
     Then, by Theorem \ref{t14}, $n$ divides $p-1$.
    \end{proof}
   \end{section}
   \begin{section}{Conclusion}
   We believe that a lot can be done about the integers of the form $\zy$.
   A better understanding of this type of integers may lead 
   to a more accessible proof of \flt\ as well as
   to the solutions of other Diophantine equations.
   For instance, an observations made on some few  integers 
   of the form $\zy$ gives us the impression that
   there may be always some prime integer $p$ divisor of $\zy$ that is
   greater than each of $|z|$ and $|y|$.
   We strongly believe that this observation holds for all integers
   of the form $\zy$. Therefore we state it as a conjecture.
   \begin{conjecture}
    Let $z$ and $y$ be two relatively prime positive integers with $z>y$
    and let $n$ be an odd prime integer.
    There is a prime integer $p$ divisor of $\zy$ such that $p>z$.
   \end{conjecture}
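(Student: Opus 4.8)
The natural first step is to appeal to Zsygmondy's theorem on primitive prime divisors. Since $z$ and $y$ are coprime, $z>y\geq 1$, and $n$ is an odd prime (so $n\geq 3$), the number $z^n-y^n$ has a primitive prime divisor $p$, i.e.\ a prime $p\mid z^n-y^n$ with $p\nmid z^k-y^k$ for all $1\leq k<n$; the only exceptional cases of Zsygmondy's theorem occur for $n\leq 2$ or for $(n,z,y)=(6,2,1)$, none of which arises here because $n$ is an odd prime. Such a $p$ must in fact divide $\zy$: writing $d$ for the multiplicative order of $z\,y^{-1}$ modulo $p$ (well defined, since $p\nmid y$ as $\gcd(z,y)=1$), primitivity rules out $d=1$, so $d=n$; hence $p\nmid z-y$, and therefore $p\mid\zy$. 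The same computation gives $n\mid p-1$, in agreement with Theorem \ref{t14}; in particular $p\geq n+1$.

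The remaining step would be to push $p\geq n+1$ up to $p>z$. The plan here would be to control the imprimitive part of $\zy$: by Lemma \ref{l2} the only prime dividing $\zy$ that need not be $\equiv 1\pmod n$ is $n$ itself, and it does so at most once, so $\zy=n^{\varepsilon}\prod_i p_i^{a_i}$ with $\varepsilon\in\{0,1\}$ and every $p_i\equiv 1\pmod n$. Since $\zy=z^{n-1}+z^{n-2}y+\cdots+y^{n-1}>z^{n-1}$, one would hope, via lifting-the-exponent bounds on the exponents $a_i$, to show that the $p_i$ with $p_i\leq z$ are too few and too small to account for a number this large, forcing some $p_i>z$.

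The main obstacle is that this last step is simply false: the conjecture does not hold as stated. With $z=18$, $y=1$, $n=3$ --- all hypotheses satisfied --- one has $\displaystyle\frac{18^3-1}{18-1}=\frac{5831}{17}=343=7^3$, whose only prime divisor is $7<18=z$; this is one of the three known solutions of the Nagell--Ljunggren equation $\frac{x^q-1}{x-1}=w^t$. Nor is this an isolated accident: for example $z=100$, $y=1$, $n=3$ gives $\displaystyle\frac{100^3-1}{100-1}=10101=3\cdot 7\cdot 13\cdot 37$, whose largest prime factor $37$ is again below $z$, and this value is not even a prime power. What the Zsygmondy argument above does establish is only the much weaker statement that $\zy$ always has a prime divisor $p$ with $n\mid p-1$ (so $p\geq n+1$). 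Forcing a prime factor genuinely larger than $z$ would require either substantial additional hypotheses on $z$ and $y$, which I do not see how to phrase cleanly --- even requiring that $\zy$ have at least two distinct prime factors fails to exclude the $z=100$ example --- or the deep analysis surrounding the Nagell--Ljunggren equation, and so lies well beyond the elementary primitive-divisor methods of this paper.
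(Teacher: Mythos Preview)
The paper does not prove this statement at all: it is explicitly labelled a \emph{conjecture}, supported only by the small table of values with $y\in\{5,7\}$. So there is no ``paper's own proof'' to compare against.

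Your analysis is more than a proof attempt: it is a \emph{disproof}. Both of your counterexamples are correct. With $z=18$, $y=1$, $n=3$ one has
\[
\frac{18^3-1}{18-1}=\frac{5831}{17}=343=7^3,
\]
whose only prime divisor $7$ is less than $z=18$; and with $z=100$, $y=1$, $n=3$,
\[
\frac{100^3-1}{100-1}=10101=3\cdot 7\cdot 13\cdot 37,
\]
with largest prime factor $37<100$. In both cases $\gcd(z,y)=1$, $z>y$, and $n$ is an odd prime, so the hypotheses are satisfied and the conjecture fails. The paper's table simply never tested $y=1$, which is exactly where the known Nagell--Ljunggren phenomena live.

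Your preliminary Zsygmondy argument is also sound and is worth keeping as a salvageable positive result: every $\zy$ with $n$ an odd prime and $\gcd(z,y)=1$ has a prime divisor $p\equiv 1\pmod n$, hence $p\geq 2n+1$ (since $p$ is odd). This is strictly stronger than what the paper extracts from Theorem~\ref{t14} alone, and it is the correct weakening of the conjecture. Your diagnosis that pushing from $p\geq 2n+1$ to $p>z$ is hopeless without further hypotheses is exactly right, and the $z=100$ example shows that even excluding prime-power values of $\zy$ does not rescue it.
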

    If it happens that this conjecture is true, then \flt\, will be an immediate consequence of it.   
    \begin{center}
	      \begin{tabular}{ | c | c | c |c|c|c|c|c|c|c|c|}
		\hline
		    $y$ &   $z$    &$\displaystyle{\frac{z^3-y^3}{z-y~}}$    & prime decomp &  $p, ~ p>z$ & &$y$ &   $z$ &$\displaystyle{\frac{z^5-y^5}{z-y~}}$      & prime decomp   &  $p, ~ p>z$  \\ \hline  	
	            $5$ &   $6$    &     $91$     &     $7*13$     &    $13$     & &$7$ &   $8$     &     $15961$     &     $11*1451$   &    $1451 $    \\ \hline
	            $5$ &   $7$    &     $109$    &     prime      &    $109$    & &$7$ &   $9$     &     $21121$     &     prime       &    $21121$    \\ \hline
	            $5$ &   $8$    &     $129$    &     $3*43$     &    $43$     & &$7$ &   $10$    &     $27731$     &     $11*2521$   &    $2521$     \\ \hline      
	            $5$ &   $9$    &     $151$    &     prime      &    $151$    & &$7$ &   $11$    &     $36061$     &     prime       &    $36061$    \\ \hline
	            $5$ &   $11$   &     $201$    &     $3*67$     &    $67$     & &$7$ &   $12$    &     $46405$     &     $5*9281$    &    $9281$     \\ \hline
	            $5$ &   $12$   &     $229$    &     prime      &    $229$    & &$7$ &   $13$    &     $59081$     &     $11*41*131$ &    $131, 41$  \\ \hline
	            $5$ &   $13$   &     $259$    &     $7*37$     &    $37$     & &$7$ &   $15$    &     $92821$     &     prime       &    $92821$    \\ \hline
	            $5$ &   $14$   &     $291$    &     $3*97$     &    $97$     & &$7$ &   $16$    &     $114641$    &     prime       &    $114641$   \\ \hline
	            $5$ &   $16$   &     $361$    &     $19*19$    &    $19$     & &$7$ &   $17$    &     $140305$    &     $5*11*2551$ &    $2551$     \\ \hline
	            $5$ &   $17$   &     $399$    &     $3*7*19$   &    $19$     & &$7$ &   $18$    &     $170251$    &     $61*2791$   &    $2791$     \\ \hline
	            $5$ &   $18$   &     $439$    &     prime      &    $439$    & &$7$ &   $19$    &     $204941$    &     $11*31*601$ &    $601, 31$  \\ \hline
	            $5$ &   $19$   &     $481$    &     $13*37$    &    $37$     & &$7$ &   $20$    &     $244861$    &     prime       &    $244861$   \\ \hline
	            $5$ &   $21$   &     $571$    &     prime      &    $571$    & &$7$ &   $22$    &     $342455$    &     $5*68491$   &    $68491$    \\ \hline
	            $5$ &   $22$   &     $619$    &     prime      &    $619$    & &$7$ &   $23$    &     $401221$    &     $71*5651$   &    $71, 5651$ \\ \hline
	            $5$ &   $23$   &     $669$    &     $3*223$    &    $223$    & &$7$ &   $24$    &     $467401$    &     $11 x 42491$&    $42491$    \\ \hline
	            $5$ &   $24$   &     $721$    &     $7*103$    &    $103$    & &$7$ &   $25$    &     $541601$    &     $31 x 17471$&    $31, 17471$\\ \hline
	            $5$ &   $26$   &     $831$    &     $3*277$    &    $277$    & &$7$ &   $26$    &     $624451$    &     prime       &    $624451$   \\ \hline
	            $5$ &   $27$   &     $889$    &     $7*127$    &    $127$    & &$7$ &   $27$    &     $716605$    &     $5*251*571$ &    $251, 571$ \\ \hline
	            $5$ &   $28$   &     $949$    &     $13*73$    &    $73$     & &$7$ &   $29$    &     $931561$    &     $41*22721$  &    $41, 22721$\\ \hline
	            $5$ &   $29$   &     $1011$   &     $3*337$    &    $337$    & &$7$ &   $30$    &     $1055791$   &     $11*41*2341$&    $41, 2341$ \\ \hline
	            $5$ &   $31$   &     $1141$   &     $7*163$    &    $163$    & &$7$ &   $31$    &     $1192181$   &     prime       &    $1192181$  \\ \hline
	      \end{tabular}
	      \vskip3mm
	      Table1: The prime decomposition of some small numbers of the forms $\zy$.\\
	        Each one of them has a prime divisor that is larger than $z$.
	    \end{center}
   \end{section}
   {\bf \Large Acknowledgements}
    \vskip3mm
    The author would like to thank Abdullah Laaradji from King Fahd University of Minerals and Petroleum 
    for his very useful comments and suggestions.\\
   %
        
   %
   \end{document}